\DeclareMathAlphabet{\mathbbo}{U}{bbold}{m}{n}
\newcommand{\E}{\mathbb{E}}
\newcommand{\R}{\mathbb{R}}
\newcommand{\N}{\mathbb{N}}
\newcommand{\Z}{\mathbb{Z}}
\renewcommand{\P}{\mathbb{P}}
\newcommand{\1}{\mathbbo{1}}
\renewcommand{\S}{S}
\renewcommand{\epsilon}{\varepsilon}
\renewcommand{\liminf}{\underline{\lim}}
\renewcommand{\limsup}{\overline{\lim}}
\newcommand{\miniop}[3]{%
\renewcommand{\arraystretch}{0.6}
\begin{array}{c}
{\scriptstyle #1}\\
#2\\
{\scriptstyle #3}
\end{array}
\renewcommand{\arraystretch}{1}}
\author{Olivier Garet}
\title[A Central Limit Theorem]{A Central Limit Theorem for the number of descents and some urn models}
\newtheorem{theorem}{Theorem}[section]
\newtheorem{lemma}[theorem]{Lemma}
\begin{document}

\address{Université de Lorraine, CNRS, IECL, F-54000 Nancy, France
}
\email{Olivier.Garet@univ-lorraine.fr}

\def\motsclefs{Central Limit Theorem, urn models.}

\subjclass[2000]{60F05, 60C05.} 
\keywords{\motsclefs}

\begin{abstract}
The purpose of this work is to establish a Central Limit Theorem that can be applied to a particular form of Markov chains, including
  the number of descents in a random permutation of $\mathfrak{S}_n$, two-type generalized P\'olya urns, and some other urn models.
  \end{abstract}

\maketitle

The purpose of this work is to establish a Central Limit Theorem that can be applied to a particular form of Markov chains, a form that is encountered in various problems of a combinatorial nature.
This class is large enough to cover a number of classical results, such as the Central Limit Theorem for the number of descents in a random permutation of $\mathfrak{S}_n$, two-type generalized P\'olya urns, as well as some other urn models.

We can immediately state the theorem:

\begin{theorem}\label{lbt}
  Let $(a_n)_{n\ge 1}$ be a sequence of integrable random variables.
  We note, for $n\ge 1$, 
  $S_n=a_1+\dots+a_{n}$.
  We suppose that there exists some constant $M$ and, for $k\in\{1,2,3\}$, non-random sequences  $(\alpha^n_{k})_{n\ge 1}$, $(D^n_{k})_{n\ge 1}$ and real numbers $\alpha_1,\alpha_2,\alpha_3,D_1,D_2,D_3$ such that
  \begin{itemize}
  \item $\forall n\ge 1\quad |a_{n}|\le M$.
  \item $\forall n\ge 1\quad \forall k\in\{1,2,3\}\quad \E[a_{n+1}^k|a_1,\dots,a_n]=D_k^n-\frac{\alpha_k^n}{n}S_n$, with\\ $\displaystyle\lim_{n\to +\infty} \alpha_k^n=\alpha_k$ and $\displaystyle\lim_{n\to +\infty} D_k^n=D_k$.
  \item $\alpha^n_1=\alpha_1+o\Bigl(\frac1{\sqrt n}\Bigr)$ and  $D^n_1=D_1+o\Bigl(\frac1{\sqrt n}\Bigr)$.   
  \end{itemize}
  We also compute  $\ell=\frac{D_1}{\alpha_1+1}$ and $D=D_2-\ell(\ell+\alpha_2)$.
  
  If it is then verified that $\alpha_1>-\frac12$ and $D>0$, then

  $\frac{S_n-n\ell}{\sqrt{n}}$ converges in distribution to $\mathcal{N}(0,S)$ with $S=\frac{D}{2\alpha_1+1}$.   
\end{theorem}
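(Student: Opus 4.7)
I would prove Theorem~\ref{lbt} by a martingale central limit theorem, after a linear renormalisation of $S_n$ that exploits the autoregressive structure of the conditional mean $\E[a_{n+1}\mid a_1,\dots,a_n] = D_1^n - \alpha_1^n S_n/n$. The natural normaliser is the deterministic product $\Gamma_n := \prod_{k=1}^{n-1}(1-\alpha_1^k/k)^{-1}$; using $\log(1-x) = -x+O(x^2)$ and $\alpha_1^k \to \alpha_1$ one obtains $\Gamma_n \sim c\,n^{\alpha_1}$ and $\sum_{k\le n}\Gamma_k^2 \sim c^2 n^{2\alpha_1+1}/(2\alpha_1+1)$, and this is precisely where the standing hypothesis $\alpha_1 > -1/2$ intervenes, ensuring that this rate is divergent.

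Set $m_n := \E S_n$. The deterministic recursion $m_{n+1} = (1-\alpha_1^n/n) m_n + D_1^n$ yields $\E[S_{n+1}-m_{n+1}\mid \mathcal F_n] = (1-\alpha_1^n/n)(S_n-m_n)$, so $N_n := \Gamma_n(S_n-m_n)$ is an $L^2$-martingale with differences $\Gamma_{k+1}(a_{k+1}-\E[a_{k+1}\mid\mathcal F_k])$. Since $|a_n|\le M$ one gets $\E N_n^2 = O(n^{2\alpha_1+1})$ and hence $\E(S_n-m_n)^2 = O(n)$; combined with $m_n/n \to \ell$ read off the recursion, this delivers the weak law $S_n/n \to \ell$ in probability. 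Plugging this into
\[
\mathrm{Var}(a_{n+1}\mid\mathcal F_n) = D_2^n - \alpha_2^n S_n/n - (D_1^n - \alpha_1^n S_n/n)^2,
\]
and using the identity $D_1-\alpha_1\ell=\ell$, gives convergence in probability to $D_2-\alpha_2\ell-\ell^2 = D$.

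A Toeplitz argument with weights $\Gamma_{k+1}^2$ and the bounded conditional variances then yields $\langle N\rangle_n\big/\sum_{k<n}\Gamma_{k+1}^2 \to D$ in probability. Lindeberg's condition is automatic, since the jumps satisfy $|N_{k+1}-N_k|\le 2M\Gamma_{k+1}=O(k^{\alpha_1})$ while $\max_{k\le n} k^{\alpha_1}/\sqrt{n^{2\alpha_1+1}} = O(1/\sqrt n)\to 0$. The martingale CLT (e.g.\ Hall--Heyde) therefore delivers $N_n/n^{\alpha_1+1/2} \to \mathcal N\bigl(0,\, c^2 D/(2\alpha_1+1)\bigr)$, equivalently $(S_n-m_n)/\sqrt n \to \mathcal N\bigl(0,\, D/(2\alpha_1+1)\bigr)$.

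I expect the final step, replacing $m_n$ by $n\ell$ via Slutsky, to be the main obstacle. The remainder $r_n := m_n - n\ell$ satisfies $r_{n+1} = (1-\alpha_1^n/n) r_n + \delta_n$ with $\delta_n := (D_1^n - D_1) + \ell(\alpha_1-\alpha_1^n) \to 0$, whose explicit solution is $r_n = \Gamma_n\bigl(r_1/\Gamma_1 + \sum_{k<n}\delta_k/\Gamma_{k+1}\bigr)$. One must show $r_n = o(\sqrt n)$; this is immediate when $\alpha_k^n, D_k^n$ are constant (so $\delta_n\equiv 0$ and $r_n = O(n^{-\alpha_1}) = o(\sqrt n)$ thanks to $\alpha_1 > -1/2$), which already covers the motivating urn and descent applications. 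The general case, with only the pointwise assumption $\delta_n\to 0$, requires a careful Ces\`aro-type analysis of the explicit formula for $r_n$ — and this is the delicate technical point of the argument.
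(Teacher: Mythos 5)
Your argument is correct in its essentials but follows a genuinely different route from the paper. The paper proves the theorem by the method of moments: it shows by induction on $k$, via a binomial expansion of $S_{n+1}^k=(S_n+a_{n+1})^k$ conditioned on $\mathcal{F}_n$ and a lemma on linear recurrences $u_{n+1}=(1-\tfrac{k_n}{n+c})u_n+Cn^{\alpha}+o(n^{\alpha})$, that $\E(S_n^k)=C_kn^{k/2}+o(n^{k/2})$ where $C_k$ are the Gaussian moments, and this is where the hypotheses on $\E[a_{n+1}^2\mid\mathcal{F}_n]$ and $\E[a_{n+1}^3\mid\mathcal{F}_n]$ are used (the third moment controls the remainder $R_k$ in the expansion). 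Your martingale normalisation $N_n=\Gamma_n(S_n-m_n)$ with the product $\Gamma_n=\prod_{k<n}(1-\alpha_1^k/k)^{-1}$ replaces all of this by the martingale CLT; it needs only the first two conditional moments (the hypothesis on $k=3$ becomes superfluous), it explains structurally where the threshold $\alpha_1>-\tfrac12$ comes from (summability of $\Gamma_k^2$ at the critical rate), and the identity $D_1-\alpha_1\ell=\ell$ giving the limiting conditional variance $D$ is a nice shortcut compared with the paper's computation of $D_2'$ after recentering. The price is the extra input of the Hall--Heyde theorem, whereas the paper is self-contained modulo the method of moments.

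Two caveats. First, $\Gamma_n\sim c\,n^{\alpha_1}$ does not follow from $\alpha_1^k\to\alpha_1$ alone: $\log\Gamma_n=\alpha_1\log n+\sum_{k<n}(\alpha_1^k-\alpha_1)/k+O(1)$, and the middle sum may diverge (take $\alpha_1^k=\alpha_1+1/\log k$). What is true is that $\Gamma_n=n^{\alpha_1}L(n)$ with $L$ slowly varying; Karamata then gives $\sum_{k\le n}\Gamma_k^2\sim n^{2\alpha_1+1}L(n)^2/(2\alpha_1+1)$ and the factor $L(n)$ cancels in the final normalisation, so your conclusion $(S_n-m_n)/\sqrt{n}\Rightarrow\mathcal{N}(0,D/(2\alpha_1+1))$ survives intact. (Also, your explicit formula for $r_n$ has the $\Gamma$'s inverted; it should read $r_n=\Gamma_n^{-1}(\Gamma_1r_1+\sum_{k<n}\Gamma_{k+1}\delta_k)$, which is what your subsequent estimate $r_n=O(n^{-\alpha_1})$ in the constant-coefficient case actually uses.) Second, the obstacle you flag at the end is real and cannot be removed under the stated hypotheses: with only $D_1^n\to D_1$ and $\alpha_1^n\to\alpha_1$, the recursion gives $m_n-n\ell=o(n)$ and nothing better, and if for instance $\delta_n\asymp1/\log n$ then $m_n-n\ell\asymp n/\log n$ is not $o(\sqrt n)$, so the centering at $n\ell$ fails. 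You should be aware that the paper has exactly the same issue: it proves $\E(S_n)=n\ell+o(n)$ and then invokes Slutsky, which is a non sequitur at the $\sqrt n$ scale. Both proofs genuinely establish the CLT for $(S_n-\E S_n)/\sqrt n$; the centering at $n\ell$ additionally requires $\E S_n-n\ell=o(\sqrt n)$, which holds in every application considered (where $D_1^n-D_1$ and $\alpha_1^n-\alpha_1$ are $O(1/n)$) but not under the bare hypotheses of the theorem. So your honest flagging of this point is not a defect of your approach relative to the paper's.
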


The article is organized as follows: in a first part, as an appetizer, we recall the definitions, the Central Limit Theorem on the number of descents and we see how to deduce it from our general theorem.
The second part is devoted to the proof of the theorem, while the last two parts give applications to urn models, including the P\'olya urn model, but not limited to it.

\section{The number of descents}

When $\sigma$ is a permutation of the set $\{1,\dots,n\}$, the descents of $\sigma$ are the points
\begin{align*}
  Desc(\sigma)&=\{i\in\{1,\dots,n-1\}; \sigma(i)>\sigma(i+1)\}.
\end{align*}
Similarly, the ascents are defined by
\begin{align*}
  Asc(\sigma)&=\{i\in\{1,\dots,n-1\}; \sigma(i)<\sigma(i+1)\}.
\end{align*}

It is well known that the number of descents (or the number of ascents)
of a uniformly chosen random permutation has an asymptotic normal behaviour:
if $\sigma_n$ is chosen uniformly among the permutations on $\{1,\dots,n\}$, we have
$$\frac{|Desc(\sigma_n)|-n/2}{\sqrt{n}}\Longrightarrow\mathcal{N}\Bigl(0,\frac1{12}\Bigr).$$
In a recent article~\cite{MR3741694}, Chatterjee and Diaconis counted no less than 6 proofs. Thus, we venture here to present a seventh -- and perhaps even an eighth, if we count as such the transcription of the question into an urn problem, which can be studied using this article or the literature.
Note also the recent proof by \"Ozdemir using martingales~\cite{ozdemir}.
Our proof of Theorem~\ref{lbt}  is based on conditional expectation together with the method of moments.
Among the six proofs for the CLT on the number of descents evoked by Chatterjee and Diaconis, the proof by David and Barton~\cite{MR0155371} (Chapter 10) also relies on the method of moments, however they use the generating functions whereas our approach is more related to conditioning.

An easy way to perform the uniform distribution on the permutations in $\{1,\dots,n\}$ is to apply the insertion algorithm: take $\sigma_1$ as the identity on  $\{1\}$, then
$\sigma_{n+1}=\sigma_n \circ (n+1\quad  U_{n+1})$, where $(U_n)$
is a sequence of independent random variables such that for each $n$, $U_n$ follows the uniform distribution on  $\{1,\dots,n\}$.
Then, for each  $n$, $\sigma_n$ follows the uniform distribution on  $\mathfrak{S}_n$.

Let $D_n=|Desc(\sigma_{n})|$. We have, for $n\ge 1$:
$$D_{n+1}=D_n+\1_{E_{n+1}},\text{ with } E_{n+1}=\{U_{n+1}-1\in Asc(\sigma_{n})\cup\{0\}\}.$$
Let also $\mathcal{F}_n=\sigma(U_1,\dots,U_{n})$, then
\begin{align}
  \label{dynamique}
  \P(E_{n+1}|\mathcal{F}_n)&=\frac{|Asc(\sigma_{n})|+1}{n+1}=\frac{(n-1-D_n)+1}{n+1}=\frac{n-D_n}{n+1}
\end{align}
Let $S_n=D_{n}-\frac{n-1}2$. Let us define $S_1=0$,  $a_1=S_1$, and for $n\ge 1$: 
$$a_{n+1}=S_{n+1}-S_{n}=(D_{n+1}-D_{n})-\frac12=\1_{E_{n+1}}-\frac12,$$
then $S_n=a_1+\dots+a_n$.
The random variable $a_n$ is $\{-1/2,1/2\}$ valued.

Finally,
\begin{align}\E(a_{n+1}|\mathcal{F}_{n})&=\P(E_{n+1}|\mathcal{F}_n)-\frac12
  \label{eqdesc}
  =\frac{\frac{n-1}2-D_n}{n+1}=-\frac{S_{n}}{n+1}.
  \end{align}

Theorem~\ref{lbt} applies particularly easily: we have $\mathcal{F}_n=\sigma(a_1,\dots,a_n)$, $(a_n)$ is uniformly bounded by  $1/2$, Equation~\eqref{eqdesc} gives the second assumption for $k=1$, $k=2$ and $k=3$ easily follow because the sequence $(a_n^2)$ is constant equal to $1/4$: we can apply the theorem with $M=1/2$, $D_1=0$, $D_2=1/4$, $D_3=0$ and $\alpha_1=1$, $\alpha_2=0$, $\alpha_3=1/4$, which gives the announced result.

\section{Proof of the Theorem}We already note  $\mathcal{F}_n=\sigma(a_1,\dots,a_n)$. Let's start with the study of the first moment.

\begin{align*}
  \E(S_{n+1})&=\E(S_n)+\E(a_{n+1})=\E(S_n)+D^n_1-\frac{\alpha^n_1}n\E(S_n)\\
  &=(1-\frac{\alpha^n_1}n)\E(S_n)+D^n_1\\
  &=(1-\frac{\alpha_1}n)\E(S_n)+D_1+o\Bigl(\frac{1}{\sqrt{n}}\Bigr)\text{ since }|\E(S_n)|\le nM
\end{align*}
To solve the recurrence, we rely on two analytic lemmas:

\begin{lemma}\label{kconstant}
  Let $C,k,\alpha,\beta\in\R$ with $\alpha-1<\beta\le \alpha$, $\beta+k>-1$ and  $(u_n)_{n\ge n_0}$,  sequences such that
  \begin{align*}
    u_{n+1}&=(1-\frac{k}{n+c})u_n+Cn^{\alpha}+o(n^{\beta}),
  \end{align*}
  
  \end{lemma}

\begin{proof}
  Note that
  \begin{align*}
    1-\frac{k}{n+c}=\frac{n+c-k}{n+c}=\frac{\Gamma(n+c-k+1)}{\Gamma(n+c-k)}\frac{\Gamma(n+c)}{\Gamma(n+c+1)}=\frac{P_{k,c}(n)}{P_{k,c}(n+1)},
  \end{align*}
  with $P_{k,c}(x)=\frac{\Gamma(x+c)}{\Gamma(x+c-k)}$.\\
  By the Stirling formula, $P_{k,c}(x)= x^k(1+O(1/x))$ at infinity, the recurrence equation is then rewritten
\begin{align*}
  P_{k,c}(n+1)u_{n+1}&=P_{k,c}(n)u_n+C P_{k,c}(n+1)n^{\alpha}+o(n^{\beta+k})\\
  &=P_{k,c}(n)u_n+C n^{\alpha+k}+o(n^{\beta+k}),
  \end{align*}
  and then, with the telescopic sum,
  $u_nP_{k,c}(n)=C\frac{n^{\alpha+k+1}}{\alpha+k+1}+o(n^{\beta+k+1}),$
  and finally
  $$u_n=\frac{C}{\alpha+k+1}n^{\alpha+1}+o(n^{\beta+1}).$$
  \end{proof}

  \begin{lemma}\label{kpositif}
  Let $C,k,\alpha\in\R$ with  $C>0$, $\alpha+k>-1$ and  $(u_n)_{n\ge n_0}$, $(k_n)_{n\ge n_0}$ sequences such that
  \begin{align*}
    u_{n+1}&=(1-\frac{k_n}{n+c})u_n+Cn^{\alpha}+o(n^{\beta}),\text{ with }\lim k_n=k.
  \end{align*}

  We suppose moreover that $\forall n\ge n_0\quad u_n\ge 0$.
  
Then, 
  $$u_n=\frac{C}{\alpha+k+1}n^{\alpha+1}+o(n^{\alpha+1}).$$  

\end{lemma}

\begin{proof} 
  One can find $n_0$ such that $n+c>0$, $v_n>0$, and $1-\frac{k_n}{n+c}>0$ for $n\ge n_0$, then for $\epsilon>0$ small enough, take $n_1\ge n_0$ such that
  $1-\frac{k-\epsilon}{n+c}>1-\frac{k_n}{n+c}>1-\frac{k+\epsilon}{n+c}>0$ for $n\ge n_1$.
  Let $w_{n_1}=x_{n_1}=u_{n_1}$, then for $n\ge n_1$,
  $$w_{n+1}=w_n\left(1-\frac{k+\epsilon}{n+c}\right)+v_n
\text{ and }x_{n+1}=x_n\left(1-\frac{k-\epsilon}{n+c}\right)+v_n.
  $$ By natural induction,
   $x_n\ge u_n\ge w_n\ge 0$ for each $n\ge n_1$, then
$$\miniop{}{\liminf}{n\to +\infty}u_n n^{-(\alpha+1)}\ge  \miniop{}{\liminf}{n\to +\infty}w_n n^{-(\alpha+1)}=\frac{C}{\alpha+k+\epsilon+1},$$
where the last equality follows from Lemma~\ref{kconstant}.
Letting $\epsilon$ tend to  $0$, we get    $\miniop{}{\liminf}{n\to +\infty}u_n n^{-(\alpha+1)}\ge  \frac{C}{\alpha+k+1}$.
Similarly
$$\miniop{}{\limsup}{n\to +\infty}u_n n^{-(\alpha+1)}\le  \miniop{}{\limsup}{n\to +\infty}x_n n^{-(\alpha+1)}=\frac{C}{\alpha+k-\epsilon+1},$$
and, letting $\epsilon$ tend to  $0$,    $\miniop{}{\limsup}{n\to +\infty}u_n n^{-(\alpha+1)}\le  \frac{C}{\alpha+k+1}$, which completes the proof.
\end{proof}
Applying Lemma~\ref{kconstant} to the sequence $(\E(S_n))_{n\ge 1}$, we get $\E(S_n)=n\ell+o(\sqrt{n})$, with $\ell=\frac{D_1}{\alpha_1+1}$.

By the Slutsky Lemma, it is now sufficient to prove that $\frac{S_n-\E(S_n)}{\sqrt{n}}\implies \mathcal{N}(0,D/(2\alpha_1+1))$.

We will first reduce to the case where $D_1^n=0$ for all $n$.
Let $\ell_n=\E(a_n)$ and $a'_n=a_n-\ell_n$.

Since $\E[a_{n+1}|a_1,\dots,a_n]=D_1^n-\frac{\alpha_1^n}{n}S_n$,
we have $\ell_n=D_1^n-\frac{\alpha_1^n}{n}\E(S_n)$,
so $\ell_n\to D_1-\alpha_1\ell=(\alpha_1+1)\ell-\alpha_1\ell=\ell$.

It is clear that $(a'_n)$
checks the same sort of assumptions as $(a_n)$, only the limit values are changed.
If we put $f_{k,n}(x)=D^n_k-\frac{\alpha^n_k}n x$, we have
$\E(a_{n+1}|\mathcal{F}_n)=f_{1,n}(S_n)$, then  $\ell_{n+1}=f_{1,n}(\E(S_n))$, and
\begin{align*}
  \E(a'_{n+1}|\mathcal{F}_n)&=f_{1,n}(S_n)-f_{1,n}(\E(S_n))=-\frac{\alpha_1^n}{n}(S_n-\E(S_n))
\end{align*}
which gives the constants associated to $a'_n$ : $\alpha'_1=\alpha_1$ and $D'^n_1=0$ for all $n$.
We also have
\begin{align*}
  \E( a'^2_{n+1}|\mathcal{F}_n)&=\E(a_{n+1}^2+\ell^2_n-2\ell_n a_{n+1}|\mathcal{F}_n)\\
  &=f_{2,n}(S_n)+\ell^2_n-2\ell_n f_{1,n}(S_n)\\
  &=f_{2,n}(S'_n)-\alpha_2^n\frac{\ell_1+\dots+\ell_n}n +\ell_n^2-2\ell_n f_{1,n}(S'_n)+2\alpha_1^n\ell_n\frac{\ell_1+\dots+\ell_n}n\\
  &=f'_{2,n}(S'_n),
\end{align*}  
with $f'_{2,n}(x)=f_{2,n}(x)-\alpha_2^n\frac{\ell_1+\dots+\ell_n}n +\ell^2_n-2\ell_n f_{1,n}(x)+2\alpha_1^n\ell_n\frac{\ell_1+\dots+\ell_n}n$, which leads to
\begin{align*}D'_2&=\lim f'_{2,n}(0)=D_2-\alpha_2\ell +\ell^2-2\ell D_1+2\alpha_1\ell^2\\
  &=D_2-\ell(\alpha_2-\ell+2D_1-2\alpha_1\ell)\\
  &=D_2-\ell(\alpha_2+\ell-2(D_1-\ell(1+\alpha_1)))\\
  &=D_2-\ell(\ell+\alpha_2)
\end{align*}

The same kind of calculation applies for $k=3$, it is not detailed because the explicit values of the corresponding constants are not necessary.

Then, we can assume without loss of generality that $D_1^n=0$ and $\E(S_n)=0$ for all $n$.
We will now estimate by recurrence the sequences of the different moments, for the orders greater than one.

For each non-negative integer $k$, there exist polynomials $R_k$ and $T_k$
such that $R_k(x,y)=\frac{k(k-1)(k-2)}{6}x^{k-3}y^3+T_k(x,y)$

$$(x+y)^k=x^k+kx^{k-1}y+\frac{k(k-1)}2 x^{k-2}y^2 +R_k(x,y),$$
with
$$\forall x\in\R\quad \forall y\in [-M,M]\quad |R_k(x,y)|\le (M+1)^k(1\vee |x|^{k-3})$$
$$\forall x\in\R\quad \forall y\in [-M,M]\quad |T_k(x,y)|\le (M+1)^k(1\vee |x|^{k-4})$$
Particularly, we get
$$\S_{n+1}^k=\S_n^k+k \S_n^{k-1}a_{n+1}+\frac{k(k-1)}2 \S_n^{k-2}a_{n+1}^2+ R_k(\S_n,a_{n+1}).$$
Conditioning by $\mathcal{F}_n$, then reintegrating, we have
\begin{align*}
  \E(\S_{n+1}^k)=&(1-\frac{k\alpha_1^n}{n})\E(\S_n^k)+\frac{D_2^n k(k-1)}2\E(\S_n^{k-2})\\
  &+(kD_1^n-\frac{k(k-1)}2\frac{\alpha_2^n}n)\E(S_n^{k-1})+\E( R_k(\S_n,a_{n+1}))\\
\end{align*}
so, since $D_1^n=0$, we have
\begin{align}
  \label{larec}
\nonumber \E(\S_{n+1}^k) &=(1-\frac{k\alpha_1^n}{n})\E(\S_n^k)+\frac{D_2^n k(k-1)}2\E(\S_n^{k-2})\\&+\E( R_k(\S_n,a_{n+1}))+O(\frac1{n}\E(S_n^{k-1})).
\end{align}

For $k=2$, we have $R_2=0$ and $\E(S_n)=0$, which simply gives
\begin{align*}
  \E(\S_{n+1}^2)=(1-\frac{2\alpha_1^n}{n})\E(\S_n^2)+D_2^n.
\end{align*}
With Lemma~\ref{kpositif}, we get
\begin{align*}
  \E(\S_{n}^2)=D'n+o(n),\text{ where }D'=\frac{D}{2\alpha_1+1}.
  \end{align*}

Let's move on to the general case.
With the help of the recursion~\eqref{larec}, we will prove that for each $k\in\N$, we have 
\begin{align}\label{afr}\E(\S_n^k)=C_k n^{k/2}+o(n^{k/2}),
\end{align}
where $(C_k)_{k\ge 0}$ is the sequence defined by $C_0=1$, $C_1=0$ and $C_{k}=D'C_{k-2}(k-1)$: since $D'>0$, this is the sequence for the moments of $\mathcal{N}(0,D')$.
We note that terms with even index are positive. This will be useful to apply Lemma~\ref{kpositif}.

The proof is by induction on $k$.
The asymptotic expansion is proved for $k=0,1,2$.
Assuming~\eqref{afr} holds for $\{0,1\dots,k-1\}$ and let us prove for $k$.
We invoke the inductive assumption for $k-2$ and $k-1$; it comes
\begin{align}
  \label{larecdeux}
    \E(\S_{n+1}^k)&=(1-\frac{k\alpha_1^n}{n})\E(\S_n^k)+C_{k-2}\frac{D_2k(k-1)}2 n^{k/2-1}\\\nonumber &+\E(R_k(\S_n,a_{n+1}))+o(n^{k/2-1}).
  \end{align}
  If $k$ is odd, then
  $$|\E(R_k(\S_n,a_{n+1}))|\le (M+1)^k (1+\E(\S_n^{k-3}))=O(n^{\frac{k-3}2}).$$
  If $k$ is even, then
\begin{align}
  |\E(T_k(\S_n,a_{n+1}))|\le (M+1)^k (1+\E(\S_n^{k-4}))=O(n^{\frac{k-4}2}),
\end{align}
whereas
\begin{align*}
  \E(\S_n^{k-3}a_{n+1}^3)&=\E(\S_n^{k-3}(D^n_3-\frac{\alpha_3^n\S_n}{n}))=O(n^{\frac{k-3}2}),
\end{align*}
which gives $\E(R_k(\S_n,a_{n+1}))=O(n^{\frac{k-3}2})$. In any case, we have thus
\begin{align}
  \label{larecd}
    \E(\S_{n+1}^k)&=(1-\frac{k\alpha^n_1}{n})\E(\S_n^k)+C_{k-2}\frac{D_2k(k-1)}2 n^{k/2-1}+o(n^{k/2-1}).
\end{align}

If $k$ is odd, we have $|\E(S_n^{k})|\le \E(S_n^{k-1} |S_n|)\le Mn\E(S_n^{k-1})=O(n^{1+(k-1)/2})$, so
\begin{align}
    \E(\S_{n+1}^k)&=(1-\frac{k\alpha_1}{n})\E(\S_n^k)+C_{k-2}\frac{D_2k(k-1)}2 n^{k/2-1}+o(n^{k/2-1}),
\end{align}
and we apply Lemma~\ref{kconstant}.
Otherwise, the sequence $\E(S_n^k)$ is non-negative and we apply Lemma~\ref{kpositif}.

In both cases, it comes that
\begin{align*}
  \E(\S_{n}^k)&=\frac{2}{(1+2\alpha_1)k}\frac{D_2C_{k-2} k(k-1)}2 n^{k/2}+o(n^{k/2})=C_k n^{k/2-1}+o(n^{k/2-1}),
\end{align*}
which completes the inductive step.
We have proved that for each $k\ge 0$, we have
$$\lim_{n\to +\infty} \E((\frac{\S_n}{\sqrt{n}})^k)=\E(X^k),$$
where $X\sim\mathcal{N}(0,D')$.
By the method of moments, we conclude that
$$\frac{\S_n}{\sqrt{n}}\Longrightarrow\mathcal{N}(0,D').$$

\section{P\'olya's urn with random coefficients}
There is a wealth of literature on P\'olya's urns and their various extensions. For the properties we will find here, the most natural references are Freedman~\cite{MR0177432}, Bagchi and Pal~\cite{MR791169}, Smythe~\cite{MR1422883}, and Janson~\cite{MR2040966}.
We work here with ballot boxes with two types of balls. The model considered is a balanced model, i.e. at each step, the number of balls in the urn increases by $N$, where $N$ is a natural non-zero fixed integer.

Let's describe the dynamics: the urn initially contains $a_0$ white balls and $b_0$ black balls. The composition of the urn varies over time as follows:
at time $n$, a ball is shot into the urn, if it is white, it is put back into the urn as well as $B^1_n$ white and $N-B^1_n$ black, where, conditionally to the above, $B^1_n$ follows the law $\mu_1$.
If it is black, it is put back in the box along with $B^2_n$ white and $N-B^2_n$ black, where, conditionally to the above, $B^2_n$ follows the law $\mu_2$.
We only impose $-1\le B^1_n\le N$ and $-1\le N-B^2_n\le N$: it is therefore possible to remove a ball, but only if it has been fired.

At time $n$, the urn therefore contains $a_0+b_0+nN$ balls.
The number of white balls in the urn at the time $n$ is $a_0+\dots+a_n$, with

$$\mathcal{L}(a_{n+1}|a_0,\dots,a_n)=\frac{a_0+S_n}{a_0+b_0+nN}\mu_1 +\frac{b_0+(nN-S_n)}{a_0+b_0+nN}\mu_2.$$

Noting $m_{i,k}=\int_{\R} x^k \ d\mu_i$, we have then
\begin{align*}
  \E(a_{n+1}^k|a_0,\dots,a_n)=\frac{a_0+S_n}{a_0+b_0+nN}m_{1,k} +\frac{b_0+(nN-S_n)}{a_0+b_0+nN}m_{2,k}.
\end{align*}
Assumptions about the form of conditional expectations are easily verified, with $D_k=m_{2,k}$
and $\alpha_k=\frac{m_{2,k}-m_{1,k}}N$.

The first condition of validity of the Central Limit Theorem is given by
$\alpha_1>-\frac12$, or $\rho=\frac{m_{1,1}-m_{2,1}}N<\frac12$.
In the classic formalism of P\'olya's ballot boxes,
the mean replacement matrix is

$$\begin{pmatrix}m_{1,1} &N-m_{1,1}\\ m_{2,1} & N-m_{2,1}\end{pmatrix},$$
whose eigenvalues are $N$ and $m_{1,1}-m_{2,1}$.
The condition $\rho<\frac12$ thus found is indeed the classic condition on the eigenvalue quotient, which is classically denoted as the "small urns" case.
The second condition $D>0$ requires a little more calculations. Letting
$r_i=\frac{m_{i,1}}N$ and $v_i=\text{Var} \frac{B^i_1}N=\frac{m_{i,2}^2}{N^2}-r_i^2$, we find
$\frac{D}{2\alpha_1+1}=\frac{N^2(R+S)}{(2\alpha_1+1)(\alpha_1+1)^2}$ with $$R=r_2\alpha_1^2(1-r_1)\text{ and }S=(\alpha_1+1)(v_2(1-r_1)+v_1r_2).$$
Since $r_2\ge 0$ and $r_1\le 1$, $R$ and $S$ are non-negative.
It is then easy to see that only a few degenerate cases lead to $D=0$: the cases $\mu_1=\delta_N$ and $\mu_2=\delta_0$, as well as the case where there is $k$ such as $\mu_1=\mu_2=\delta_k$.
Thus, in the two-ball model, we recovers Smythe's CLT~\cite{MR1422883} on "small urns". It is interesting to note that we have an explicit form for the covariance of the normal limit distribution.

\subsection{Example: a removed ball and added balls.}
In this example, at each step, one ball is removed from the urn, then black and white balls are added, $b$ in total (with $b\ge 2$), the number of white balls following a distribution $\mu$ on $\{0,\dots,b\}$, with $\mu\not\in\{\delta_0,\delta_b\}$.
With the above notations, we have
$N=b-1$, $\mu_1=\mu*\delta_{-1}$ and $\mu_2=\mu$.
If we denote by $m$ the expectation of $\mu$ and by $\sigma^2$ its variance, we have then
$m_{1,1}=m-1$, $m_{1,2}=\sigma^2+(m-1)^2$, $m_{2,1}=m$, $m_{2,2}=\sigma^2+m^2$.
$\alpha_1=\frac{m_{1,1}-m_{2,1}}{N}=\frac{1}{b-1}$
and $\alpha_2=\frac{m_{2,2}-m_{1,2}}{N}=\frac{2m-1}{b-1}$
The quotient $\rho=\frac{m_{1,1}-m_{2,1}}{N}=-\frac{1}{b-1}$ checks $\rho<\frac12$. We have $D_1=m_{2,1}=m$, hence $$\ell=\frac{D_1}{1+\alpha_1}=\frac{D_1}{1-\rho}
=\frac{m}{1+\frac1{b-1}}=\frac{m(b-1)}b,$$ $D_2=m_{2,2}=\sigma^2+m^2$.
We finally have
$$D=D_2-\ell(\ell+\alpha_2)=\sigma^2+\frac{m}{b}(1-\frac{m}{b}).$$
So if $B_n$ denotes the number of white balls at time $n$
$$\frac{B_n-\frac{m(b-1)}b n}{\sqrt{n}}\Longrightarrow\mathcal{N}\left(0,\frac{b-1}{b+1}(\sigma^2+\frac{m}{b}(1-\frac{m}{b}))\right).$$
\subsection{Example: Bernard Friedman urn}
In this example, we add $\alpha$ balls of the drawn color and $\beta$ balls of the other color. We have thus
$N=\alpha+\beta$, $\mu_1=\delta_{\alpha}$ and $\mu_2=\delta_{\beta}$,
which gives us $D_1=m_{2,1}=\beta$, $D_2=m_{2,2}=\beta^2$,
$\alpha_1=\frac{m_{2,1}-m_{1,1}}N=\frac{\beta-\alpha}{\beta+\alpha}$
and $\alpha_2=\frac{m_{2,2}-m_{2,1}}N=\frac{\beta^2-\alpha^2}{\beta+\alpha}=\beta-\alpha$.
So $\ell=\frac{D_1}{\alpha_1+1}=\frac{\beta}{\frac{2\beta}{\alpha+\beta}}=\frac{\alpha+\beta}2$,
$D=D_2-\ell(\ell+\alpha_2)=\beta^2-\frac{\alpha+\beta}2\frac{3\beta-\alpha}2=\frac{(\alpha-\beta)^2}4$.
So if $3\beta>\alpha$ and $S_n$ is the number of white balls in the urn after $n$ steps, we have
$$\frac{S_n-\frac{\alpha+\beta}2n}{\sqrt{n}}\Longrightarrow\mathcal{N}\left(0,\frac{(\alpha-\beta)^2(\alpha+\beta)}{4(3\beta-\alpha)}\right).$$
In particular, if we add only one ball of the other color, we have
$\alpha=0$ and $\beta=1$ and
$$\frac{S_n-\frac{n}2}{\sqrt{n}}\Longrightarrow\mathcal{N}\Bigl(0,\frac1{12}\Bigr).$$

\subsection{Returning to the dynamics of descents}
The coincidence of this last equation with the CLT for downhill runs may lead to a new look to Equation~\eqref{dynamique}, which can be rewritten as
$$\mathcal{L}(\1_{E_{n+1}}|\mathcal{F}_n)=\frac{1+D_n}{1+n}\delta_0 +\frac{n-D_n}{1+n}\delta_1.$$
We can then see that the dynamics of variation of the descents is exactly the dynamics of the variation of the number of white balls in a Bernard Friedman urn starting from a white ball, and where we add to each step a ball of a different color from the one drawn. One can e.g. imagine that there is time $n$ $D_n+1$ white balls and $n-D_n$ black balls in the urn.
It is then obvious that the Central Limit Theorems obtained correspond. This is the eighth proof that was announced.

Although it is easy, this identification of the process of the number of descents with a Bernard Friedman urn seems new to us. We can extend the analogy even further. After one step, our Bernard Friedman urn necessarily contains a white ball and a black ball: this is the initial state of the urn considered by Diaconis and Fulton~\cite{MR1218674} to study the number $N_n$ of strictly negative points visited by the Internal Diffusion--Limited Aggregation model on $\Z$ after he has received $n$ points.
Remember that the  Internal Diffusion--Limited Aggregation model is defined as follows: start at time 1 with with one point at the origin and say it forms the first block.
Then, at each turn of the clock, start a walker from the origin and stop it as soon as it leaves the block, then enlarge the block by adding the stopped walker to it. At times $n$, the block is $\{-N_n,\dots,-N_n+(n-1)\}$, where $N_n$ denotes the number of points in the negative part.
The gambler's ruin theory says that one will exit from the left with probability $\frac{-N_n+(n-1)+1}{(-N_n+(n-1)+1)+(N_n+1)}=\frac{n-N_n}{n+1}$.
So, we get
$$\mathcal{L}(N_{n+1}-N_n|N_1,\dots,N_n)=\frac{1+D_n}{1+n}\delta_0 +\frac{n-D_n}{1+n}\delta_1$$
and the processes $(N_n)_{n\ge 1}$ and $(D_{n})_{n\ge 1}$ have the same distribution.

\section{Balls on a circle}
We conclude with an example that does not allow itself to be reduced to a P\'olya urn.
In this model, $b_0$ black balls and $w_0$ white balls are initially placed in a circle, with $1\le w_0<b_0$. The balls are arranged along the circle
to respect a placement rule which is described as follows:

First, white and black balls are placed alternately. At a given moment, when there are only black balls to be placed, they placed next to each other (see Figure~1).

Then, $3$ neighbouring balls are randomly drawn and removed; then $a$ white balls and $b$ black balls are inserted, in order to respect the placement rule.

To ensure that the rule can always be met, assumptions must be made about $a$ and $b$. The assumptions $a\ge 1$ and $a+b\ge 4$ are natural so that  there is always at least one white ball in the circle  and the number of balls grows.
We also need to add assumptions so that the blacks always stay ahead.
We can consider alternatively
\begin{enumerate}
\item $b-a\ge 3$.
\item $b-a=1$ and $w_0+b_0$ is odd.
\end{enumerate}

At the $n^{\text{th}}$ step, the urn contains $(a+b-3)n+w_0+b_0$ balls, including $S_n$ white.

Let's clarify the equations of the dynamics. The three balls that are removed can be spotted by the central ball. There are 4 possible configurations for this group of balls:
\begin{itemize}
\item a white ball surrounded by two black balls. This happens $S_n$ times, the gain for the whites is $a-1$; The advance of the blacks increases by $(b-a)-1\ge 0$.
\item a black ball surrounded by two white. This happens $S_n-1$ times, the gain for the whites is $a-2$; the advance of the blacks increases by $(b-a)+1\ge 2$.
\item two adjacent black balls and one white ball. It happens $2$ times if the black balls represent strictly more than half of the balls, which, as we will see, is ensured by the assumption that we have taken, the gain for the whites is $a-1$; the advance of the blacks increases by $(b-a)-1\ge 0$.
\item three black balls otherwise,the gain for the whites is $a$.
  The advance of the blacks increases by $(b-a)-3$.
  Under Hypothesis (1), $(b-a)\ge 3$, so the blacks are still ahead.
  Under Hypothesis (2), $b-a=1$, so the advance is reduced by two units.
  
However, the existence of this group of three balls means that Black's lead is at least 2; but the dynamics equations mean that the number of balls is always an odd number, so Black's lead is at least 3; losing two units means Black's lead is at least one.
\end{itemize}  

\begin{figure}
  \center
  \includegraphics[scale=0.9]{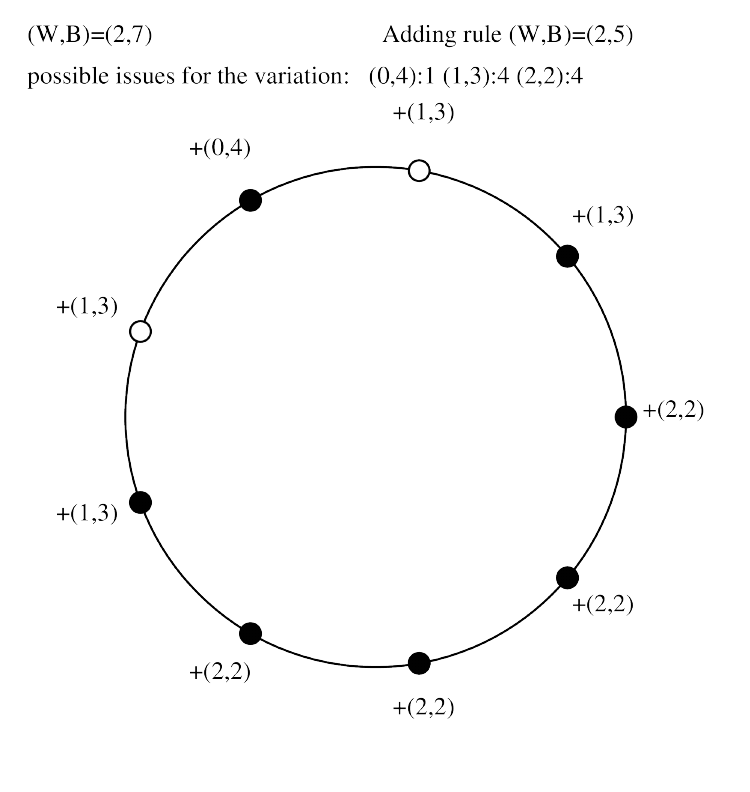}
  \caption{The dynamics, in a configuration with 2 white and 7 black balls. The transition rule is given by $a=2$ and $b=5$.}
\end{figure}  
If we write $a_{n+1}=S_{n+1}-S_n$, we have
\begin{align}
\label{bonneequation}
  \mathcal{L}(a_{n+1}|a_1,\dots,a_n)&=\frac{S_n-1}{(a+b-3)n+w_0+b_0}\delta_{a-2}+\frac{S_n+2}{(a+b-3)n+w_0+b_0}\delta_{a-1}\nonumber \\& +\left(1-\frac{2S_n+1}{(a+b-3)n+w_0+b_0}\right)\delta_a
\end{align}
Thus for any positive integer $k$, we have
\begin{align*}
 & \E(a_{n+1}^k|a_1,\dots,a_n)\\&=
  \frac{(a-2)^k(S_n-1)}{(a+b-3)n+w_0+b_0}+\frac{(a-1)^k(S_n+2)}{(a+b-3)n+w_0+b_0}\\&\quad +a^k-\frac{(2S_n+1)a^k}{(a+b-3)n+w_0+b_0}\\
  &=
  \frac{(a-2)^k+(a-1)^k-2a^k}{(a+b-3)n+w_0+b_0}S_n+a^k+\frac{-(a-2)^k+2(a-1)^k-1}{(a+b-3)n+w_0+b_0}.
 \end{align*} 
Conditional expectations are indeed of the required form.

  In particular $D_1=a$, $D_2=a^2$, $\alpha_1=\frac{3}{a+b-3}$, $\alpha_2=\frac{6a-5}{a+b-3}$.
  It is obvious that $\alpha_1>-\frac12$, but the fact that $D>0$ needs more care.

  Let $C=a+b$ and $x=\frac{a}{a+b}$. We have $C\ge 4$ and $x\in(0,\frac12)$.
  We get
  \begin{align*}
    D_1&=Cx;\quad \alpha_1=\frac{3}{C-3};\quad \alpha_2=\frac{6Cx-5}{C-3};\\
    \ell&=\frac{D_1}{\alpha_1+1}=(C-3)x;\\
    D&=D_2-\ell(\ell+\alpha_2)=x(5-9x)>0;\\
    S&=\frac{D}{2\alpha_1+1}=x(5-9x)\frac{C-3}{C+3}.
    \end{align*}

Then,  
$$\frac{S_n-(C-3)xn}{\sqrt n}\Longrightarrow \mathcal{N}\left(0,x(5-9x)\frac{C-3}{C+3}\right).$$

For example, with $w_0=a=2,b_0=b=3$, we have
$$\frac{S_n-\frac45n}{\sqrt n}\Longrightarrow \mathcal{N}\Bigl(0,\frac7{50}\Bigr).$$

When $a+b\ge 4$, with $b=a+2$ or ($b=a+1$ and $w_0+b_0$ is even), the white never dominates, but equality of the two colors is possible (which is why~\eqref{bonneequation} fails).
Simulations seem to show that eventually, blacks definitely prevail with $\frac{S_n}{n(a+b-3)}\to \frac{a}{a+b}<\frac12$ and that the stated Central Limit Theorem is always valid.

When the Central Limit Theorem is established, the convergence in probability of 
 $\frac{S_n}{n(a+b-3)}$ to $\frac{a}{a+b}$ follows.
The question of almost sure convergence is natural, and, presumably, more delicate.

\def\refname{References}
\bibliographystyle{plain}
\bibliography{euler}

\end{document}